\documentclass[a4paper,11pt]{amsart}

\usepackage[T1]{fontenc}
\usepackage{lmodern}
\usepackage[utf8]{inputenc}
\usepackage[english]{babel}
\usepackage{amsmath,amssymb,amsfonts,amsthm,mathtools}
\usepackage{enumitem}
\usepackage{microtype}
\usepackage{geometry}
\usepackage[colorlinks=true,linkcolor=blue,citecolor=blue,urlcolor=blue]{hyperref}

\usepackage{cite}

\numberwithin{equation}{section}

\newtheorem{theorem}{Theorem}[section]
\newtheorem{proposition}[theorem]{Proposition}
\newtheorem{lemma}[theorem]{Lemma}
\newtheorem{corollary}[theorem]{Corollary}
\newtheorem{remark}[theorem]{Remark}

\newcommand{\R}{\mathbb{R}}
\newcommand{\Z}{\mathbb{Z}}
\newcommand{\eps}{\varepsilon}
\newcommand{\cM}{\mathcal{M}}

\newcommand{\weak}{\rightharpoonup}
\newcommand{\norm}[1]{\left\lVert #1\right\rVert}

\title[Large defocusing parameter]{Existence of solutions for a large defocusing parameter in a strongly indefinite Schr\"odinger equation}

\author[B. Bieganowski]{Bartosz Bieganowski}

\address[B. Bieganowski]{\newline\indent
	Faculty of Mathematics, Informatics and Mechanics, \newline\indent
	University of Warsaw, \newline\indent
	ul. Banacha 2, 02-097 Warsaw, Poland}
\email{\href{mailto:bartoszb@mimuw.edu.pl}{bartoszb@mimuw.edu.pl}}

\date{}

\begin{document}

\begin{abstract}
Let $N\geq 3$, $2<q<p<2^*$, and let $V\in L^\infty(\R^N)$ be real-valued and $\Z^N$-periodic. We assume that $0$ belongs to a finite spectral gap of the periodic Schr\"odinger operator $L=-\Delta+V$. We prove that there exists $\lambda_\infty>0$ such that, for every $\lambda\geq\lambda_\infty$, the competing-power equation
$$
-\Delta u+V(x)u=|u|^{p-2}u-\lambda |u|^{q-2}u
\quad\text{in }\R^N
$$
has a nontrivial solution $u_\lambda\in H^1(\R^N)$. Moreover,
$$
\norm{u_\lambda}_{H^1(\R^N)}+\norm{u_\lambda}_{L^\infty(\R^N)}
\leq C\lambda^{-1/(q-2)}.
$$
We also show that, after the natural rescaling and lattice translations, a sequence of such solutions converges weakly to a nontrivial solution of the pure defocusing equation $Lv=-|v|^{q-2}v$.

\medskip

\noindent \textbf{Keywords:} variational methods, nonlinear Schr\"odinger equations, strongly indefinite problems, sign-changing nonlinearities
   
\noindent \textbf{AMS 2020 Subject Classification:} 35J20, 35Q55, 58E05 

\end{abstract}

\maketitle

\section{Introduction and main result}

We consider the following stationary Schr\"odinger equation
\begin{equation}\label{eq:original}
Lu:=(-\Delta+V(x))u
=|u|^{p-2}u-\lambda |u|^{q-2}u,
\qquad x\in\R^N,
\end{equation}
where
\begin{equation}\label{eq:pq}
N\geq3,
\qquad
2<q<p<2^*:=\frac{2N}{N-2}.
\end{equation}

Equation \eqref{eq:original} describes the so-called \textit{standing waves}, namely solutions of the form
$$
\psi(t,x) = e^{\mathrm{i}\omega t} u(x)
$$
to the time-dependent Schr\"odinger equation
$$
\mathrm{i} \partial_t \psi = -\Delta \psi + (V(x) - \omega) \psi - h(|\psi|) \psi,
$$
and appears in nonlinear optics. When $V$ is a periodic potential, it finds in applications in the theory of photonic crystals. Then $V$ describes the periodic nano-structure of the material and the nonlinearity is responsible for the nonlinear polarization of the medium. For the details we refer the reader to \cite{Pankov2005, Slusher, Kuchment, Buryak}. In particular, when we consider the term of the form $|u|^{p-2}u - \lambda |u|^{q-2}q$ we deal with a mixture of self-focusing and defocusing materials.

Motivated by this application, throughout the paper we assume that
\begin{equation}\label{eq:V}
V\in L^\infty(\R^N),\qquad
V(x+k)=V(x)\quad\text{for a.e. }x\in\R^N,\ k\in\Z^N,
\end{equation}
and that $0$ lies in a finite gap of the spectrum of the Schr\"odinger operator
$$
L=-\Delta+V
\quad\text{in }L^2(\R^N).
$$
More precisely, 
\begin{equation}\label{eq:gap}
0 \not\in \sigma(L),
\qquad
\sigma(L)\cap(-\infty,0)\ne\emptyset,
\end{equation}
where $\sigma(L)$ denotes the spectrum of $L$. In such a setting, solutions $u$ are also known as \textit{gap solitons}. 
Thus the problem is genuinely strongly indefinite. Note that $V$ cannot be constant. Indeed, if $V \equiv V_0 \in \R$, then $\sigma(L) = [V_0,\infty)$ and there are no spectral gaps. Variational methods for nonlinear elliptic equations and Hamiltonian systems have their classical roots in the mountain-pass and minimax theory of Ambrosetti-Rabinowitz and Rabinowitz \cite{AmbrosettiRabinowitz1973,Rabinowitz1978,Rabinowitz1986} and in the Nehari constraint \cite{Nehari1960}, see also \cite{CotiZelatiRabinowitz1991, Jeanjean1999, JeanjeanTanaka2003}. In the strongly indefinite setting, where the quadratic part has infinite-dimensional positive and negative spectral subspaces, the usual finite-dimensional linking picture has to be replaced by genuinely infinite-dimensional constructions. An influential abstract result in this direction is the generalized linking theorem of Kryszewski and Szulkin \cite{KryszewskiSzulkin1998}.

For periodic nonlinear Schr\"odinger equations, strongly indefinite variational methods were developed in a series of works. Bartsch and Ding treated periodic Schr\"odinger equations by deformation and linking methods \cite{BartschDing1999}; Pankov developed the generalized Nehari manifold approach \cite{Pankov2005}. Refinements of these methods can be found, among others, in \cite{LiWangZeng2006,SzulkinWeth2009,Liu2012,Mederski2016,dePaivaKryszewskiSzulkin2017}. In particular, the reduction of Szulkin and Weth \cite{SzulkinWeth2009} gives a convenient minimax characterization of the ground-state level in the strongly indefinite case, while de Paiva, Kryszewski and Szulkin \cite{dePaivaKryszewskiSzulkin2017} showed how the method can be adapted under a weak monotonicity assumption. These developments form the variational background for the construction used below.

A separate difficulty appears when the nonlinear part itself changes sign. Chen and Wang \cite{ChenWang2014} obtained an infinite-dimensional linking theorem without the upper semicontinuity assumption occurring in the classical Kryszewski-Szulkin framework and applied it to a periodic strongly indefinite Schr\"odinger equation with sign-changing nonlinearity. For Schr\"odinger equations with a positive linear part, sign-changing nonlinearities and periodic potentials were studied by Bieganowski and Mederski \cite{BieganowskiMederski2018}, who obtained ground states by a Nehari-type minimization. In the genuinely strongly indefinite case, Bernini and Bieganowski \cite{BerniniBieganowski2022} developed a generalized linking theorem designed specifically for functionals with a sign-changing nonlinear part; this framework was subsequently complemented by an abstract multiplicity theory for critical orbits in dislocation spaces by Bernini, Bieganowski and Strzelecki \cite{BerniniBieganowskiStrzelecki2025}. 

The model considered here is the particularly transparent competing-power equation \eqref{eq:original}. Its nonlinear primitive
$$
\frac{|t|^p}{p}-\lambda\frac{|t|^q}{q}
$$
is negative near the origin and positive for large $|t|$. For sufficiently small $\lambda$, existence is available from generalized linking methods for sign-changing nonlinearities; see in particular \cite{BerniniBieganowski2022} and the discussion in \cite{Bieganowski2026}. A least-energy result for the pure-power problem was recently obtained in \cite{Bieganowski2026}. 

The purpose of the present paper is to analyze precisely the opposite regime $\lambda\to\infty$. We prove that solutions do not disappear for large defocusing part of the nonlinearity, namely for every sufficiently large $\lambda$ there exists a nontrivial solution, and its $H^1$- and $L^\infty$-norms are $O(\lambda^{-1/(q-2)})$. After the scaling associated with the $q$-term, the equation approaches the pure defocusing case
$$
Lv=-|v|^{q-2}v,
$$
which is covered \cite{Pankov2005}. The main point is that we do not perturb a fixed limiting solution and therefore require no nondegeneracy. Instead we truncate the higher power, interchange the positive and negative spectral spaces, apply a generalized Nehari-Pankov method, and obtain estimates uniform in the truncation parameter. Concentration compactness in the spirit of Lions \cite{Lions1984} then handles the loss of compactness caused by lattice translations. The truncation is eventually inactive, which yields a solution of the original competing equation.

The main theorem is the following.

\begin{theorem}\label{thm:main}
Assume \eqref{eq:pq}, \eqref{eq:V}, and \eqref{eq:gap}. Then there exist constants
$$
\lambda_\infty>0,
\qquad C>0,
$$
depending only on $N,p,q,V$, such that for every $\lambda\geq\lambda_\infty$ equation \eqref{eq:original} possesses a nontrivial weak solution
$$
u_\lambda\in H^1(\R^N)\cap L^\infty(\R^N).
$$
Moreover,
$$
\norm{u_\lambda}_{H^1(\R^N)}
+\norm{u_\lambda}_{L^\infty(\R^N)}
\leq C\lambda^{-1/(q-2)}.
$$
\end{theorem}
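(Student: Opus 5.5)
We rescale. Set $u=\lambda^{-1/(q-2)}v$ and $\eps:=\lambda^{-(p-q)/(q-2)}\to0$. Then \eqref{eq:original} becomes
$$
Lv=-|v|^{q-2}v+\eps|v|^{p-2}v,
$$
and the claimed bound is equivalent to $\norm{v}_{H^1}+\norm{v}_{L^\infty}\le C$ uniformly in small $\eps$. We multiply by $-1$:
$$
(-L)v=|v|^{q-2}v-\eps|v|^{p-2}v .
$$
In this form the positive and negative spectral spaces of $L$ are interchanged, $H^1=E^+\oplus E^-$ with $E^\pm$ the spectral spaces of $-L$. The dominant nonlinearity $|v|^{q-2}v$ is now superlinear and focusing, which is exactly Pankov's setting for the limit problem. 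The competing term $\eps|v|^{p-2}v$ destroys the Nehari--Pankov monotonicity for large $|v|$. To handle this we \emph{truncate} it. Fix $M>0$ (to be chosen later, independent of $\eps$) and replace $\eps|t|^{p-2}t$ by $\eps g_M(t)$, where $g_M(t)=|t|^{p-2}t$ for $|t|\le M$ and $g_M(t)$ grows like $M^{p-q}|t|^{q-2}t$ for $|t|\ge M$. We choose the extension so that the nonlinearity $f_\eps(t)=|t|^{q-2}t-\eps g_M(t)$ satisfies, for $\eps\le\eps_0(M)$: $f_\eps(t)=o(t)$ at $0$, subcritical growth, $F_\eps(t)/t^2\to\infty$, and $t\mapsto f_\eps(t)/|t|$ strictly increasing. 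For the last property we need $\eps M^{p-q}$ small, which is harmless since $\eps\to0$ with $M$ fixed. Near zero, monotonicity holds because $|t|^{q-2}$ dominates $\eps|t|^{p-2}$ whenever $\eps M^{p-q}<(q-2)/(p-2)$, roughly.

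With this truncation the Szulkin--Weth reduction / generalized Nehari--Pankov manifold $\cM_\eps$ applies. We obtain a ground state $v_\eps$ at level $c_\eps=\inf_{\cM_\eps}J_\eps>0$. Existence uses Lions' lemma and $\Z^N$-translations in the standard way: a bounded Cerami sequence that is not vanishing, since vanishing would force $c_\eps=0$.

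The key point is \emph{uniformity in $\eps$}, for which we need two estimates.
\begin{enumerate}[label=(\roman*)]
\item An upper bound $c_\eps\le c_0+1$. Take $w\in E^+$ fixed and maximize $J_\eps$ over $\R^+w\oplus E^-$. Since $J_\eps\le J_0+\eps C\int|v|^{q}$ on this set, and the maxima are attained on a bounded set by the coercivity of Pankov's argument (with constants uniform because $F_\eps\ge(1-\eps M^{p-q})|t|^q/q$), the bound follows.
\item A uniform $H^1$ bound for $v_\eps$. Use $J_\eps(v)-\tfrac12J'_\eps(v)v=\int(\tfrac12f_\eps v-F_\eps)\ge\delta\int|v|^q$, again uniform thanks to truncation. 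This controls $\norm{v_\eps}_{L^q}$. Testing with $v^\pm$ and using the spectral gap, interpolation between $L^q$ and $L^{2^*}$ then yields $\norm{v_\eps}\le C$, as in Pankov.
\end{enumerate}
A uniform $L^\infty$ bound $\norm{v_\eps}_\infty\le K$ then follows from Moser iteration (or the Brezis--Kato argument), since $|f_\eps(t)|\le C(|t|+|t|^{q-1})$ with $C$ independent of $\eps$ and $M$, and $V\in L^\infty$.

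The main obstacle is to make this a posteriori bound consistent with the truncation. The constant $K$ must not depend on $M$, which is why $g_M$ is built with $|g_M(t)|\le|t|^{p-1}$ for $|t|\le M$ and growth $\le M^{p-q}|t|^{q-1}$ beyond, so that $|\eps g_M(t)|\le \eps M^{p-q}|t|^{q-1}\le|t|^{q-1}$. The level and $H^1$ bounds in (i) and (ii) must also be independent of $M$. Granting this, we choose $M:=K+1$ and then $\eps_0=\eps_0(M)$. For $\eps\le\eps_0$ we get $|v_\eps|\le K<M$, so the truncation is inactive and $v_\eps$ solves the untruncated rescaled equation. Nontriviality holds because $c_\eps\ge c_->0$ uniformly, by the uniform Nehari lower bound. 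Undoing the scaling, $u_\lambda=\lambda^{-1/(q-2)}v_\eps$ with $\lambda_\infty=\eps_0^{-(q-2)/(p-q)}$ gives Theorem~\ref{thm:main} with the stated estimate.
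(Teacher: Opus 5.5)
Your proposal is correct and is essentially the paper's argument. Truncating so that $\eps M^{p-q}$ stays below a fixed threshold makes the $q$-power bounds and the level, $H^1$ and $L^\infty$ bounds independent of both $\eps$ and $M$, so fixing $M=K+1$ and then taking $\eps\le\eps_0(M)$ is equivalent to the paper's coupling $\eps M_\eps^{p-q}=\delta$. Your Moser iteration and fiber-perturbation level bound just replace the paper's $W^{2,s}$ bootstrap and its comparison $\Phi_1\le\Phi_\eps\le\Phi_{a_\delta}$.

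There is one arithmetic slip. The scaling $u=\lambda^{-1/(q-2)}v$ gives $\eps=\lambda^{-(p-2)/(q-2)}$, not $\lambda^{-(p-q)/(q-2)}$, so $\lambda_\infty=\eps_0^{-(q-2)/(p-2)}$.
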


The proof has five steps.
\begin{enumerate}[label=\textup{(\roman*)}]
\item We scale the equation so that $\lambda\to\infty$ becomes a small parameter $\eps\to0$ multiplying the $p$-term.
\item We truncate the $p$-term at an $\eps$-dependent amplitude $M_\eps\to\infty$. The resulting nonlinearity has one sign and satisfies uniform $q$-power bounds.
\item The defocusing truncated equation is rewritten as a standard strongly indefinite focusing functional after interchanging the two spectral spaces. The generalized Nehari-Pankov reduction gives a ground state $v_\eps$.
\item A comparison of the generalized Nehari minimax levels with the pure $q$-power problem yields bounds for $v_\eps$ in $H^1$ that are uniform in $\eps$. A subcritical elliptic bootstrap then yields a uniform $L^\infty$ bound.
\item Since $M_\eps\to\infty$, the uniform $L^\infty$ bound implies that the truncation is inactive for sufficiently small $\eps$. Thus $v_\eps$ solves the untruncated scaled equation, and scaling back gives \eqref{eq:original}.
\end{enumerate}

We also study what happens when $\lambda \to \infty$.

\begin{theorem}\label{thm:profile}
Let $\lambda_n\to\infty$, set
$$
\eps_n=\lambda_n^{-(p-2)/(q-2)},
\qquad
v_n=\lambda_n^{1/(q-2)}u_{\lambda_n},
$$
where $u_{\lambda_n} \in H^1 (\R^N)$ is the solution found in Theorem \ref{thm:main}. After passing to a subsequence, there exist $k_n\in\Z^N$ and a nontrivial $v_0\in H^1(\R^N)$ such that
\begin{align}
v_n(\cdot+k_n)&\weak v_0
&&\text{in }H^1(\R^N),\label{eq:profile-weak}\\
v_n(\cdot+k_n)&\to v_0
&&\text{in }L^r_{\mathrm{loc}}(\R^N),\quad 2\leq r<2^*,\label{eq:profile-local}
\end{align}
and $v_0$ weakly solves
\begin{equation}\label{eq:profile-eq}
Lv_0=-|v_0|^{q-2}v_0.
\end{equation}
\end{theorem}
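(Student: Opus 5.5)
The plan is to work in the scaled variables of step (i) of the proof of Theorem \ref{thm:main}. With $v_n=\lambda_n^{1/(q-2)}u_{\lambda_n}$, a direct computation shows that $v_n$ weakly solves
$$
Lv_n=\eps_n|v_n|^{p-2}v_n-|v_n|^{q-2}v_n,\qquad \eps_n=\lambda_n^{-(p-2)/(q-2)}\to0.
$$
The estimate in Theorem \ref{thm:main} gives $\norm{v_n}_{H^1}+\norm{v_n}_{L^\infty}\leq C$ uniformly in $n$. Moreover, $v_n$ coincides with the ground state $v_{\eps_n}$ of the truncated problem, since the truncation is inactive.

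\textbf{Step 1: a uniform lower bound.} I would first show that $v_n$ stays away from zero in a suitable sense. Testing the equation with $v_n^+-v_n^-$, where $\pm$ denote the spectral projections of $L$, gives
$$
\norm{v_n}^2\lesssim \eps_n\int|v_n|^{p-1}|v_n^+-v_n^-|+\int|v_n|^{q-1}|v_n^+-v_n^-|\lesssim \eps_n\norm{v_n}^{p-1}+\norm{v_n}^{q-1}.
$$
Since $q>2$ and $v_n\neq0$, this yields $\norm{v_n}\geq\delta>0$.

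\textbf{Step 2: non-vanishing (the main obstacle).} I would rule out vanishing by Lions' lemma. Suppose $\sup_{y}\int_{B_1(y)}|v_n|^2\to0$. Then $v_n\to0$ in $L^r$ for $2<r<2^*$ by boundedness in $H^1$. The same tested identity would then give $\norm{v_n}^2\lesssim\norm{v_n}_{L^{q}}^{q-1}\norm{v_n}_{L^q}$-type terms tending to $0$; more precisely, by H\"older, $\int|v_n|^{q-1}|w|\leq\norm{v_n}_{q}^{q-1}\norm{w}_q$ with $\norm{w}_q\lesssim\norm{w}$ bounded. This contradicts Step 1. The delicate point is that one needs only the boundedness of $\norm{v_n^+-v_n^-}$, which holds since the spectral projections are bounded on $H^1$.

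Hence there are $y_n$ with $\int_{B_1(y_n)}|v_n|^2\geq c>0$. Enlarging the ball to radius $1+\sqrt N$, I can choose $k_n\in\Z^N$ with $\int_{B_{1+\sqrt N}(0)}|v_n(\cdot+k_n)|^2\geq c$.

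\textbf{Step 3: passage to the limit.} By $\Z^N$-periodicity of $V$, the translates $w_n=v_n(\cdot+k_n)$ solve the same equation and have the same norms. I would extract a subsequence with $w_n\weak v_0$ in $H^1$. Rellich's theorem then gives \eqref{eq:profile-local}, and the local $L^2$ lower bound yields $v_0\neq0$.

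For $\varphi\in C_c^\infty(\R^N)$, the term $\eps_n\int|w_n|^{p-2}w_n\varphi$ is bounded by $\eps_n\norm{w_n}_{p}^{p-1}\norm{\varphi}_p\to0$. The term $\int|w_n|^{q-2}w_n\varphi\to\int|v_0|^{q-2}v_0\varphi$ by local $L^q$ convergence, since $q<2^*$. The linear part converges by weak convergence. By density, $v_0$ weakly solves \eqref{eq:profile-eq}.
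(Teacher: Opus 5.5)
Your proposal is correct and follows essentially the paper's route: a uniform lower bound on $\norm{v_n}$, exclusion of Lions vanishing by testing with the spectral components, lattice translation, and passage to the limit with the $\eps_n$-term vanishing in $L^{p'}$. The only differences are cosmetic: you test the untruncated equation directly, keeping the $\eps_n$-term, instead of the truncated one with $|f_{\eps}(t)|\le|t|^{q-1}$, and you rederive the lower bound rather than quoting Proposition \ref{prop:H1-uniform}. There is one bookkeeping slip in Step 1: H\"older together with $\norm{v_n^+-v_n^-}=\norm{v_n}$ gives $\norm{v_n}^2\lesssim\eps_n\norm{v_n}^{p}+\norm{v_n}^{q}$, not exponents $p-1$ and $q-1$, and the inequality as you wrote it yields no lower bound when $q\le 3$.
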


In \cite{BerniniBieganowski2022}, it was shown that there exists $\lambda_{*}>0$ such that a nontrivial solution exists for every $\lambda\leq \lambda_{*}$, whereas Theorem \ref{thm:main} establishes existence for all sufficiently large parameters, namely for $\lambda\geq\lambda_{\infty}$. This leaves open the natural and intriguing question of what happens in the intermediate regime $\lambda \in ( \lambda_{*}, \lambda_{\infty})$.

\section{Spectral splitting and the large-\texorpdfstring{$\lambda$}{lambda} scaling}
\label{sect:2}

Let $P^+$ and $P^-$ denote the spectral projections on $L^2(\R^N)$ corresponding to $\sigma(L) \cap (0,\infty)$ and $\sigma(L) \cap (-\infty,0)$, respectively. By \eqref{eq:gap},
$$
H^1(\R^N)=X^+\oplus X^-,
\qquad
X^\pm=:P^\pm H^1(\R^N).
$$
We equip $H^1(\R^N)$ with the spectral norm
$$
\norm{u}^2
:=\norm{|L|^{1/2}u}_{L^2 (\R^N)}^2, \qquad u \in H^1 (\R^N).
$$
This norm is equivalent to the usual $H^1$ norm; see, e.g., \cite{Pankov2005}. If
$$
u=u^++u^-,\qquad u^\pm=P^\pm u,
$$
then
$$
\langle Lu,u\rangle
=\norm{u^+}^2-\norm{u^-}^2.
$$
We shall repeatedly use the continuous Sobolev embeddings
\begin{equation}\label{eq:sobolev-spectral}
\norm{u}_{L^r(\R^N)}\leq C_r\norm{u},
\qquad 2\leq r\le2^*.
\end{equation}
Since $L$ commutes with the lattice translations $\tau_k u:=u(\cdot+k)$, $k\in\Z^N$, the spaces $X^\pm$ and the spectral norm are invariant under $\tau_k$.

We now perform the large-parameter scaling. Put
\begin{equation}\label{eq:scaling}
u=\lambda^{-1/(q-2)}v.
\end{equation}
Then
$$
Lu=\lambda^{-1/(q-2)}Lv,
$$
while
$$
|u|^{p-2}u
=\lambda^{-(p-1)/(q-2)}|v|^{p-2}v
$$
and
$$
\lambda|u|^{q-2}u
=\lambda^{-1/(q-2)}|v|^{q-2}v.
$$
Hence \eqref{eq:original} is equivalent to
\begin{equation}\label{eq:scaled}
Lv
=\eps |v|^{p-2}v-|v|^{q-2}v,
\qquad
\eps:=\lambda^{-(p-2)/(q-2)}.
\end{equation}
More precisely, $u \in H^1 (\R^N)$ is a weak solution to \eqref{eq:original} if and only if $v \in H^1 (\R^N)$ given by \eqref{eq:scaling} is a weak solution to \eqref{eq:scaled}.
Moreover
$$
\lambda\to\infty
\quad\Longleftrightarrow\quad
\eps\to0^+.
$$
At least formally, the limiting equation is
\begin{equation}\label{eq:limit}
Lv=-|v|^{q-2}v.
\end{equation}
This is the pure defocusing case. Under \eqref{eq:gap}, existence of solutions to \eqref{eq:limit} is known, see \cite{Pankov2005}. 

\section{An \texorpdfstring{$\eps$}{epsilon}-dependent truncation}
\label{sect:3}

Fix once and for all a number
\begin{equation}\label{eq:delta-choice}
0<\delta<\delta_*:=
\min\left\{
1-\frac{2}{q},
\frac{q-2}{p-2},
\frac{q-1}{p-1}
\right\}.
\end{equation}
For $\eps>0$, define
$$
M_\eps:=\left(\frac{\delta}{\eps}\right)^{1/(p-q)}.
$$
Thus
\begin{equation}\label{eq:epsM}
\eps M_\eps^{p-q}=\delta,
\qquad
M_\eps\to\infty
\quad\text{as }\eps\to0^+.
\end{equation}

For $M>0$, define an odd continuous function $a_M:\R\to\R$ by
\begin{equation}\label{eq:aM}
a_M(t)=
\begin{cases}
|t|^{p-2}t,& |t|\leq M,\\
M^{p-q}|t|^{q-2}t,& |t|> M.
\end{cases}
\end{equation}
We set
$$
f_\eps(t):=|t|^{q-2}t-\eps a_{M_\eps}(t),
\qquad
F_\eps(t):=\int_0^t f_\eps(s)\,ds.
$$
The truncated auxiliary equation is
\begin{equation}\label{eq:truncated}
Lv=-f_\eps(v).
\end{equation}
On the set $|v|\leq M_\eps$, this is exactly \eqref{eq:scaled}.

The next lemma collects the quantitative properties of the truncation. The fact that every constant is independent of $\eps$ is crucial.

\begin{lemma}\label{lem:truncation}
For every $\eps>0$ and every $t\in\R$ the following hold:
\begin{align}
(1-\delta)|t|^q
&\leq f_\eps(t)t\leq |t|^q,\label{eq:ft-bounds}\\
\frac{1-\delta}{q}|t|^q
&\leq F_\eps(t)\le\frac1q|t|^q,\label{eq:F-bounds}\\
|f_\eps(t)|&\leq |t|^{q-1}.\label{eq:f-growth}
\end{align}
Moreover
\begin{enumerate}[label=\textup{(\roman*)}]
\item $f_\eps$ is odd, continuous, locally Lipschitz, and $f_\eps(t)t>0$ for $t\ne0$;
\item $f_\eps(t)=o(t)$ as $t\to0$;
\item $F_\eps(t)/t^2\to\infty$ as $|t|\to\infty$;
\item the map
$$
t\mapsto\frac{f_\eps(t)}{|t|}
$$
is strictly increasing on $(-\infty,0)$ and on $(0,\infty)$;
\item with
$$
\theta:=q(1-\delta)>2,
$$
one has the uniform Ambrosetti-Rabinowitz condition (cf. \cite{AmbrosettiRabinowitz1973})
\begin{equation}\label{eq:AR}
0<\theta F_\eps(t)\leq f_\eps(t)t,
\qquad t\ne0.
\end{equation}
\end{enumerate}
\end{lemma}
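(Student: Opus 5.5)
Since $f_\eps$ is odd, we work with $t\ge0$ throughout and extend every claim by symmetry. The core is a two-regime computation of the ratio $\eps a_{M_\eps}(t)/t^{q-1}$ for $t>0$. For $t\le M_\eps$ it equals $\eps t^{p-q}\le \eps M_\eps^{p-q}=\delta$. For $t>M_\eps$ it equals exactly $\eps M_\eps^{p-q}=\delta$, by \eqref{eq:epsM}. Hence
$$
0\le \eps a_{M_\eps}(t)\le \delta t^{q-1},
\qquad
(1-\delta)t^{q-1}\le f_\eps(t)\le t^{q-1},
\qquad t\ge0 .
$$
Multiplying by $t$ gives \eqref{eq:ft-bounds}. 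Integrating from $0$ to $t$ gives \eqref{eq:F-bounds}. Estimate \eqref{eq:f-growth} follows because $\delta<1$.

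The qualitative items then follow quickly. Oddness and continuity are immediate, since both branches of $a_M$ equal $M^{p-1}$ at $|t|=M$. Local Lipschitz continuity holds because each branch is $C^1$ on its closed piece, with $p,q>2$. Positivity $f_\eps(t)t>0$ for $t\neq0$ is the lower bound in \eqref{eq:ft-bounds}, and (ii) follows from $|f_\eps(t)|\le|t|^{q-1}$ with $q>2$. For (iii), $F_\eps(t)/t^2\ge\frac{1-\delta}{q}|t|^{q-2}\to\infty$. For (iv), on $(0,\infty)$ we have
$$
\frac{f_\eps(t)}{t}=
\begin{cases}
t^{q-2}-\eps t^{p-2}, & t\le M_\eps,\\
(1-\delta)t^{q-2}, & t>M_\eps .
\end{cases}
$$
The second branch is strictly increasing because $q>2$. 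For the first branch the derivative is $t^{q-3}\bigl((q-2)-\eps(p-2)t^{p-q}\bigr)$, and it is positive since $\eps t^{p-q}\le\delta<\frac{q-2}{p-2}$. A continuous function that is strictly increasing on both pieces is strictly increasing on $(0,\infty)$. On $(-\infty,0)$ the map $f_\eps(t)/|t|$ is odd in the appropriate sense, so monotonicity there follows from the positive half. The condition $\delta<\frac{q-1}{p-1}$ would similarly give $f_\eps'>0$, although that is not needed for the stated items.

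The main obstacle is the Ambrosetti--Rabinowitz inequality \eqref{eq:AR} with the uniform $\theta=q(1-\delta)$. The crude route works directly: $\theta F_\eps(t)\le \theta\frac{t^q}{q}=(1-\delta)t^q\le f_\eps(t)t$, combining the upper bound in \eqref{eq:F-bounds} with the lower bound in \eqref{eq:ft-bounds}. Strict positivity comes from $F_\eps(t)>0$ for $t\ne0$, which holds by the lower bound in \eqref{eq:F-bounds}. Finally, $\theta>2$ is exactly the choice $\delta<1-\frac{2}{q}$ in \eqref{eq:delta-choice}. So all three constraints in $\delta_*$ are accounted for, and every constant is independent of $\eps$ because it depends only on $\delta$, via the identity $\eps M_\eps^{p-q}=\delta$.
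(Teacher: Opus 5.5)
Your proof is correct and follows essentially the same route as the paper: reduce to $t\ge0$ by oddness, use $\eps t^{p-q}\le\eps M_\eps^{p-q}=\delta$ on the two regimes, check the sign of the derivative of $f_\eps(t)/t$ using $\delta<\frac{q-2}{p-2}$, and combine the upper bound on $F_\eps$ with the lower bound on $f_\eps(t)t$ to get \eqref{eq:AR}. The one difference is that you obtain \eqref{eq:F-bounds} by integrating the pointwise bounds $(1-\delta)s^{q-1}\le f_\eps(s)\le s^{q-1}$, whereas the paper computes $F_\eps$ explicitly in both regimes; your version is a slight shortcut that avoids the check $\frac1q-\frac{\delta}{p}\ge\frac{1-\delta}{q}$.
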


\begin{proof}
By oddness it suffices to consider $t\geq0$. If $0\leq t\leq M_\eps$, then
\begin{equation}\label{eq:f-inside}
f_\eps(t)=t^{q-1}-\eps t^{p-1}
=t^{q-1}\bigl(1-\eps t^{p-q}\bigr).
\end{equation}
Since $t\leq M_\eps$, \eqref{eq:epsM} gives
$$
0\le\eps t^{p-q}\le\eps M_\eps^{p-q}=\delta.
$$
Hence
$$
(1-\delta)t^{q-1}\leq f_\eps(t)\leq t^{q-1}.
$$
If $t\geq M_\eps$, then by \eqref{eq:aM} and \eqref{eq:epsM},
\begin{equation}\label{eq:f-outside}
f_\eps(t)
=t^{q-1}-\eps M_\eps^{p-q}t^{q-1}
=(1-\delta)t^{q-1}.
\end{equation}
This proves \eqref{eq:ft-bounds}, \eqref{eq:f-growth}, and the positivity statement.

We compute the primitive explicitly. For $0\leq t\leq M_\eps$,
\begin{equation}\label{eq:F-inside}
F_\eps(t)=\frac{t^q}{q}-\eps\frac{t^p}{p}.
\end{equation}
Thus $F_\eps(t)\leq t^q/q$, and, using $\eps t^{p-q}\le\delta$,
$$
F_\eps(t)
\geq t^q\left(\frac1q-\frac{\delta}{p}\right)
\geq \frac{1-\delta}{q}t^q,
$$
because
$$
\frac1q-\frac{\delta}{p}-\frac{1-\delta}{q}
=\delta\left(\frac1q-\frac1p\right)>0.
$$
For $t\geq M_\eps$, let $A_M(t)=\int_0^t a_M(s)\,ds$. Then
\begin{align*}
A_{M_\eps}(t)
&=\frac{M_\eps^p}{p}
+M_\eps^{p-q}\frac{t^q-M_\eps^q}{q}.
\end{align*}
Using $\eps M_\eps^{p-q}=\delta$ and $\eps M_\eps^p=\delta M_\eps^q$, we obtain
\begin{align}
F_\eps(t)
&=\frac{t^q}{q}-\eps A_{M_\eps}(t)\notag\\
&=\frac{1-\delta}{q}t^q
+\delta M_\eps^q\left(\frac1q-\frac1p\right).
\label{eq:F-outside}
\end{align}
The second term in \eqref{eq:F-outside} is nonnegative, proving the lower bound in \eqref{eq:F-bounds}. The upper bound follows either from \eqref{eq:F-outside} directly or by integrating $f_\eps(s)\leq s^{q-1}$.

The behavior at zero and infinity is now immediate from \eqref{eq:f-inside}, \eqref{eq:F-bounds}, and $q>2$.

It remains to prove the strict monotonicity of $f_\eps(t)/t$ on $(0,\infty)$. For $0<t<M_\eps$,
$$
\frac{f_\eps(t)}{t}=t^{q-2}-\eps t^{p-2},
$$
and therefore
\begin{align*}
\frac{d}{dt}\left(\frac{f_\eps(t)}{t}\right)
&=t^{q-3}\left((q-2)-\eps(p-2)t^{p-q}\right)\\
&\geq t^{q-3}\left((q-2)-(p-2)\delta\right)>0
\end{align*}
by \eqref{eq:delta-choice}. For $t>M_\eps$, \eqref{eq:f-outside} gives
$$
\frac{f_\eps(t)}{t}=(1-\delta)t^{q-2},
$$
which is strictly increasing. At $t=M_\eps$ the two expressions coincide
$$
M_\eps^{q-2}-\eps M_\eps^{p-2}
=(1-\delta)M_\eps^{q-2}.
$$
Oddness gives the corresponding monotonicity on $(-\infty,0)$.

Note that, for $0<t<M_\eps$,
$$
f_\eps'(t)
=(q-1)t^{q-2}-\eps(p-1)t^{p-2}
\geq\bigl((q-1)-(p-1)\delta\bigr)t^{q-2}>0,
$$
while for $t>M_\eps$,
$$
f_\eps'(t)=(q-1)(1-\delta)t^{q-2}.
$$
Thus $f_\eps$ is locally Lipschitz (in fact, piecewise $C^1$) and nondecreasing. Its a.e. derivative satisfies a bound of the form
$$
|f_\eps'(t)|\leq C|t|^{q-2},
$$
where $C$ depends on $p,q,\delta$, but not on $\eps$.

Finally, \eqref{eq:ft-bounds} and the upper estimate in \eqref{eq:F-bounds} yield
$$
f_\eps(t)t
\geq(1-\delta)|t|^q
=q(1-\delta)\frac{|t|^q}{q}
\geq q(1-\delta)F_\eps(t).
$$
Since $q(1-\delta)>2$ by \eqref{eq:delta-choice}, this is \eqref{eq:AR}.
\end{proof}

\section{The defocusing variational problem and the generalized Nehari-Pankov manifold}
\label{sect:4}

The natural energy functional for \eqref{eq:truncated} is $J_\eps : H^1 (\R^N) \rightarrow \R$,
$$
J_\eps(v)
=\frac12\left(\norm{v^+}^2-\norm{v^-}^2\right)
+\int_{\R^N}F_\eps(v)\,dx, \quad v \in H^1 (\R^N).
$$
It is more convenient to multiply it by $-1$ and define $\Phi_\eps : H^1 (\R^N) \rightarrow \R$ by
\begin{equation}\label{eq:Phi}
\Phi_\eps(v)
:=-J_\eps(v)
=\frac12\norm{v^-}^2-\frac12\norm{v^+}^2
-\int_{\R^N}F_\eps(v)\,dx.
\end{equation}
Both $J_\eps$ and $\Phi_\eps$ are of $C^1$ class and the critical points of $J_\eps$ and $\Phi_\eps$ coincide. In particular,
$$
\Phi_\eps'(v)(\varphi)
=-\langle Lv,\varphi\rangle
-\int_{\R^N}f_\eps(v)\varphi\,dx.
$$

For the functional \eqref{eq:Phi}, the positive quadratic space is $X^-$ and the negative quadratic space is $X^+$. Set
$$
Y:=X^-,\qquad Z:=X^+.
$$
Then $H^1=Y\oplus Z$, and, if $v=y+z$ with $y\in Y$, $z\in Z$,
$$
\Phi_\eps(y+z)
=\frac12\norm{y}^2-\frac12\norm{z}^2
-\int_{\R^N}F_\eps(y+z)\,dx.
$$
This is exactly the usual strongly indefinite focusing form, with the two spectral spaces interchanged.

Following \cite{Pankov2005,SzulkinWeth2009}, we define the generalized Nehari-Pankov manifold
$$
\cM_\eps
:=\left\{
v\in H^1(\R^N)\setminus Z \ : \
\Phi_\eps'(v)(v)=0,
\quad
\Phi_\eps'(v)(\zeta)=0\ \forall\zeta\in Z
\right\}.
$$
For $y\in Y\setminus\{0\}$, put
$$
\widehat E(y):=Z\oplus\R_+y
=\{ty+z:t\geq0,\ z\in Z\}.
$$
Let
$$
S_Y:=\{y\in Y:\norm{y}=1\}.
$$

We recall the following property, see \cite[Lemma 2.2]{SzulkinWeth2009}.

\begin{lemma}\label{lem:scalar}
Let $f:\R\to\R$ be odd, continuous, $f(t)t>0$ for $t\ne0$, and assume that $t\mapsto f(t)/|t|$ is strictly increasing on each of $(-\infty,0)$ and $(0,\infty)$. Let $F(t)=\int_0^t f(s)\,ds$. Then, for all $a,b\in\R$ and all $t\geq0$,
\begin{equation}\label{eq:scalar-ineq}
f(a)\left(\frac{t^2-1}{2}a+tb\right)
+F(a)-F(ta+b)
\le0.
\end{equation}
If $a\ne0$, equality in \eqref{eq:scalar-ineq} is possible only when $t=1$ and $b=0$.
\end{lemma}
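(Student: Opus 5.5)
Fix $a,b\in\R$ and consider the function of $t\geq0$
$$
\varphi(t):=f(a)\Bigl(\frac{t^2-1}{2}a+tb\Bigr)+F(a)-F(ta+b).
$$
The plan is to show that $\varphi\le0$ everywhere by locating its maximum. If $a=0$, then $\varphi(t)=-F(b)\le0$, because $F\geq0$ (indeed $f(s)s>0$ forces $F$ to be increasing in $|s|$ and $F(0)=0$). If moreover $b\neq 0$, then $F(b)>0$, so equality forces $b=0$; the rigidity statement is only claimed for $a\neq0$, so nothing more is needed here. From now on let $a\ne0$. By oddness of $f$ (so $F$ is even), replacing $(a,b)$ by $(-a,-b)$ leaves $\varphi$ unchanged, so I may assume $a>0$.

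The key step is to treat $b$ as a free variable for fixed $t$ and maximize over it. First, $\varphi(1)\big|_{b=0}=0$. Fix $t\ge0$ and set $g(b):=f(a)tb-F(ta+b)$, which is concave in $b$ when $f$ is nondecreasing. However, monotonicity of $f(t)/|t|$ does \emph{not} directly give that $f$ is nondecreasing. So rather than relying on concavity, I will argue through $s:=ta+b$. With $s$ as the variable,
$$
\varphi=f(a)\Bigl(\frac{t^2-1}{2}a+\bigl(s-ta\bigr)\Bigr)+F(a)-F(s)
= f(a)\Bigl(s-\frac{(t^2+1)a}{2}\Bigr)\cdot 1+\dots,
$$
and after simplification this reduces to
$$
\varphi=F(a)-F(s)+f(a)\Bigl(s-a\Bigr)-\frac{f(a)a}{2}(t-1)^2 .
$$
Since $f(a)a>0$, the last term is $\le0$ and vanishes only when $t=1$. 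It therefore remains to prove the one-variable inequality
$$
h(s):=F(a)-F(s)+f(a)(s-a)\le0\quad\text{for all }s\in\R,
$$
with equality only at $s=a$.

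To establish this inequality I write $h(s)=\int_s^a\bigl(f(\sigma)-f(a)\bigr)\,d\sigma$ and split into three cases. For $s\in(0,a)$ and $\sigma\in(s,a)$, the monotonicity of $f(\sigma)/\sigma$ gives $f(\sigma)<\frac{\sigma}{a}f(a)<f(a)$, so $h(s)<0$. For $s>a$ the same reasoning gives $f(\sigma)>f(a)$ on $(a,s)$, and because the integral runs backwards, again $h(s)<0$. The remaining case $s\le0$ is the main obstacle. Here I use $F(s)\ge0$ together with $f(a)(s-a)\le -f(a)a$, which yields
$$
h(s)\le F(a)-f(a)a .
$$
Then $F(a)=\int_0^a f<\int_0^a \frac{\sigma}{a}f(a)\,d\sigma=\frac{f(a)a}{2}$, so $h(s)<-\frac{f(a)a}{2}<0$. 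Consequently $\varphi\le0$, and equality with $a\ne0$ forces both $t=1$ and $s=a$, i.e.\ $b=0$. The delicate points are the bookkeeping in the algebraic reduction and the case $s\le0$, where the sign of $f$ alone is not enough and one must use $F(a)<f(a)a/2$.
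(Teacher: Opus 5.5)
There is a genuine gap, and it is in the algebraic reduction you called bookkeeping. With $s=ta+b$ one has $tb=t(s-ta)=ts-t^2a$, not $s-ta$. Hence
\[
\frac{t^2-1}{2}a+tb=ts-\frac{t^2+1}{2}a=t(s-a)-\frac a2(t-1)^2,
\qquad
\varphi=F(a)-F(s)+t\,f(a)(s-a)-\frac{f(a)a}{2}(t-1)^2 .
\]
Your formula is missing the factor $t$ in front of $f(a)(s-a)$. For instance, at $t=0$, $b=0$ the true value is $\varphi=F(a)-\frac12 f(a)a$, whereas your formula gives $F(a)-\frac32 f(a)a$.

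With the factor $t$ present, the variables no longer decouple:
\[
\varphi=h(s)+(t-1)f(a)(s-a)-\frac{f(a)a}{2}(t-1)^2 .
\]
For $s\ge0$, $s\neq a$, the last two terms reach $\frac{f(a)}{2a}(s-a)^2>0$ at the admissible point $t=s/a$. So the tangent-line bound $h(s)\le0$ that you prove is too weak; you would need $h(s)\le-\frac{f(a)}{2a}(s-a)^2$. A symptom of the error is that your argument never uses $t\ge0$. Yet $t=-1$, $b=0$ gives $\varphi=F(a)-F(-a)=0$, so the rigidity claim genuinely depends on that restriction.

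The repair uses exactly the inequality you wrote down and then weakened. Expanding $(ta+b)^2$ gives, for $a\neq0$, the exact identity
\[
\varphi=-\frac{f(a)}{2a}\,b^2-G(ta+b),
\qquad
G(s):=F(s)-F(a)-\frac{f(a)}{2a}\bigl(s^2-a^2\bigr),
\]
where $f(a)/a>0$.

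For $a>0$, evenness of $F$ gives
\[
G(s)=\int_a^{|s|}\Bigl(f(\sigma)-\frac{f(a)}{a}\sigma\Bigr)\,d\sigma .
\]
The strict inequalities $f(\sigma)<\frac{\sigma}{a}f(a)$ for $0<\sigma<a$ and $f(\sigma)>\frac{\sigma}{a}f(a)$ for $\sigma>a$ then yield $G\ge0$, with equality iff $|s|=a$. So the right comparison is of $F$ with the parabola $\frac{f(a)}{2a}\sigma^2$, not with the tangent line; do not weaken $f(\sigma)<\frac{\sigma}{a}f(a)$ to $f(\sigma)<f(a)$.

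Consequently $\varphi\le0$, and equality forces $b=0$ and $|ta|=a$, i.e.\ $|t|=1$, hence $t=1$ since $t\ge0$. The paper gives no proof and simply cites Lemma 2.2 of Szulkin--Weth. This corrected version of your substitution argument is a self-contained elementary proof under exactly the stated hypotheses, with no growth assumption at infinity.
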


We next recall, in the present notation, the existence result to the truncated problem via the Nehari-Pankov manifold approach. It follows directly from \cite[Theorem 1.1]{SzulkinWeth2009}, but we include the sketch of the proof for the sake of presentation. In particular, we give the details that will also be used for the uniform estimates in the next sections.

\begin{proposition}[{\cite[Theorem 1.1]{SzulkinWeth2009}}]\label{prop:ground-truncated}
For every $\eps>0$, the functional $\Phi_\eps$ possesses a nontrivial critical point $v_\eps \in H^1 (\R^N)$ such that
\begin{equation}\label{eq:c-eps}
c_\eps:=\Phi_\eps(v_\eps)
=\inf_{v\in\cM_\eps}\Phi_\eps(v)
=\inf_{y\in S_Y}\ \max_{w\in\widehat E(y)}\Phi_\eps(w)
>0.
\end{equation}
In particular, $v_\eps$ is a ground state of the truncated defocusing problem \eqref{eq:truncated}.
\end{proposition}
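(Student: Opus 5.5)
The plan is to verify the hypotheses of the Szulkin--Weth scheme for $\Phi_\eps$ in the splitting $H^1=Y\oplus Z$, and then follow their argument. I will organize it in three stages.

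\textbf{Stage 1: local geometry and the fibre maxima.} By \eqref{eq:f-growth} and \eqref{eq:sobolev-spectral},
$$
\int_{\R^N}F_\eps(v)\,dx\leq \frac{C_q^q}{q}\norm{v}^q .
$$
Hence there is $\rho>0$ with
$$
\inf_{y\in Y,\ \norm{y}=\rho}\Phi_\eps(y)\geq \tfrac14\rho^2>0 .
$$
Next I show that for each $y\in Y\setminus\{0\}$ we have $\Phi_\eps\leq0$ on $\widehat E(y)\setminus B_R$ for some $R=R(y)$. Take $w_n=t_ny+z_n$ with $\norm{w_n}\to\infty$ and $\Phi_\eps(w_n)\geq0$, and normalize $\bar w_n=w_n/\norm{w_n}$. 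Since $F_\eps\geq0$, this forces $\norm{\bar z_n}\leq\norm{\bar t_n y}$, so $\bar t_n$ stays bounded away from $0$. The weak limit of $\bar w_n$ then has a nonzero $Y$-component (as $Y$ is fixed one-dimensional in the $y$ direction). Superquadraticity of $F_\eps$, namely $F_\eps(t)\geq\frac{1-\delta}{q}|t|^q$, gives a contradiction via Fatou. This step is simpler than usual because of the explicit $q$-power lower bound.

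Weak upper semicontinuity of $\Phi_\eps$ on $\widehat E(y)$ follows from convexity of $F_\eps$ (indeed $f_\eps$ is nondecreasing) and weak lower semicontinuity of $\norm{z}^2$. Therefore $\Phi_\eps$ attains its maximum on $\widehat E(y)$.

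\textbf{Stage 2: the Nehari--Pankov manifold as a graph over $S_Y$.} Apply Lemma~\ref{lem:scalar} with $a=v(x)$, $b=\zeta(x)$ (where $\zeta\in Z$ and $t\geq0$). Integrating, and using $\langle L\cdot,\cdot\rangle$ on the quadratic part, yields for $v\in\cM_\eps$
$$
\Phi_\eps(tv+\zeta)\leq\Phi_\eps(v)-\tfrac12\norm{\zeta}^2 ,
$$
with strict inequality unless $t=1$ and $\zeta=0$. So on each $\widehat E(y)$ the set $\cM_\eps\cap\widehat E(y)$ consists of exactly one point $\hat m(y)$, the unique global maximizer; existence comes from Stage 1. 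This gives
$$
\inf_{\cM_\eps}\Phi_\eps=\inf_{S_Y}\max_{\widehat E(y)}\Phi_\eps ,
$$
and both are at least $\tfrac14\rho^2$ since $\widehat E(y)$ contains $\rho y/\norm{y}$.

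Following \cite{SzulkinWeth2009}, $\hat m$ restricted to $S_Y$ is continuous and $\cM_\eps$ is bounded away from $Z$. The reduced functional $\Psi_\eps=\Phi_\eps\circ\hat m$ is $C^1$ on $S_Y$, and Palais--Smale sequences of $\Psi_\eps$ correspond to those of $\Phi_\eps$. Ekeland's principle then yields a $(PS)_{c_\eps}$ sequence $(v_n)\subset\cM_\eps$.

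\textbf{Stage 3: compactness modulo translations (main obstacle).} Boundedness of $(v_n)$ follows from \eqref{eq:AR}:
$$
c_\eps+o(1)\bigl(1+\norm{v_n}\bigr)\geq \Phi_\eps(v_n)-\tfrac12\Phi_\eps'(v_n)v_n=\int_{\R^N}\Bigl(\tfrac12 f_\eps(v_n)v_n-F_\eps(v_n)\Bigr)dx\geq\Bigl(\tfrac12-\tfrac1\theta\Bigr)\int_{\R^N}f_\eps(v_n)v_n\,dx .
$$
This bounds $\norm{v_n}_{L^q}^q$. Testing with $v_n^+$ and $v_n^-$ and using H\"older with \eqref{eq:f-growth} then bounds $\norm{v_n}$.

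Vanishing must be excluded. If $\sup_y\int_{B_1(y)}|v_n|^2\to0$, Lions' lemma gives $v_n\to0$ in $L^q$. Then $\Phi_\eps'(v_n)v_n^-\to0$ forces $\norm{v_n^-}\to0$, so
$$
\Phi_\eps(v_n)\leq\tfrac12\norm{v_n^-}^2\to0 ,
$$
contradicting $c_\eps>0$.

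So, after translations by $k_n\in\Z^N$ (which leave $\Phi_\eps$ invariant), $v_n\weak v_\eps\neq0$. Local compactness passes to the limit in $\Phi_\eps'$, so $v_\eps$ is a critical point and lies in $\cM_\eps$. Fatou applied to $\tfrac12f_\eps(v)v-F_\eps(v)\geq0$ gives $\Phi_\eps(v_\eps)\leq c_\eps$, hence equality, and $v_\eps$ is a ground state. The delicate point is the weak-to-critical-point passage, which uses that $f_\eps$ is continuous with $q$-growth; this makes $v\mapsto\int f_\eps(v)\varphi\,dx$ weakly sequentially continuous for fixed $\varphi\in C_c^\infty$.
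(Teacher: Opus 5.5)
Your proposal is correct and follows essentially the same Szulkin--Weth scheme as the paper: fibre maxima from coercivity on $\widehat E(y)$ plus convexity of $F_\eps$, uniqueness via Lemma~\ref{lem:scalar}, Ekeland's principle for $\Psi_\eps$ on $S_Y$, boundedness of the Palais--Smale sequence, Lions' lemma with $\Z^N$-translations, and Fatou to identify the level. The differences are minor: you get $c_\eps>0$ from the sphere $\{y\in Y:\norm{y}=\rho\}$ and exclude vanishing via $\Phi_\eps(v_n)\le\frac12\norm{v_n^-}^2\to0$, whereas the paper uses the uniform Nehari bounds $\norm{w}\geq\rho_0$ and $\Phi_\eps(w)\geq\kappa_\delta\norm{w}_{L^q(\R^N)}^q$ on $\cM_\eps$.
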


\begin{proof}

\noindent \textbf{Step 1: the geometry.}
Fix $y\in Y\setminus\{0\}$. Since $f_\eps(t)=o(t)$ at zero,
$$
\Phi_\eps(ty)
=\frac{t^2}{2}\norm{y}^2+o(t^2)>0
$$
for all sufficiently small $t>0$. On the other hand, $\Phi_\eps(z)\le0$ for $z\in Z$.

The functional $\Phi_\eps$ tends to $-\infty$ along unbounded sequences in the fiber $\widehat E(y)$. To see this, let
$$
w_n=t_ny+z_n\in\widehat E(y),
\qquad
\norm{w_n}\to\infty,
$$
and put $r_n=\norm{w_n}$ and $\widetilde w_n=w_n/r_n$. Passing to a subsequence,
$$
\frac{t_n}{r_n}\to t_0\geq0,
\qquad
\frac{z_n}{r_n}\weak z_0\in Z.
$$
If $t_0=0$, then, by orthogonality of $Y$ and $Z$,
$$
\frac{\norm{z_n}^2}{r_n^2}\to1.
$$
Since $F_\eps\geq0$,
$$
\frac{\Phi_\eps(w_n)}{r_n^2}
\leq \frac12\frac{t_n^2\norm y^2}{r_n^2}
-\frac12\frac{\norm{z_n}^2}{r_n^2}
\longrightarrow-\frac12.
$$
If $t_0>0$, then
$$
\widetilde w_n\weak t_0y+z_0\ne0,
$$
because $t_0y\in Y\setminus\{0\}$ and $z_0\in Z$. By local compactness, after passing to a subsequence $\widetilde w_n(x)\to\widetilde w(x)$ a.e., where $\widetilde w=t_0y+z_0\ne0$. On a set of positive measure, $\widetilde w(x)\ne0$; there
$$
|w_n(x)|=r_n|\widetilde w_n(x)|\to\infty.
$$
Since $F_\eps(t)/t^2\to\infty$, Fatou's lemma gives
$$
\frac1{r_n^2}\int_{\R^N}F_\eps(w_n)\,dx\to\infty.
$$
Hence $\Phi_\eps(w_n)\to-\infty$.

The function $F_\eps$ is convex because $f_\eps$ is nondecreasing (see the proof of Lemma \ref{lem:truncation}). Therefore $w\mapsto\int_{\R^N} F_\eps(w) \, dx$ is weakly lower semicontinuous. The negative quadratic term on $Z$ is weakly upper semicontinuous. It follows that $\Phi_\eps|_{\widehat E(y)}$ attains a positive global maximum at some point
$$
m_\eps(y)=t_yy+z_y,
\qquad t_y>0,
\quad z_y\in Z.
$$
At such a maximum the derivative vanishes in the $y$ direction and in all $Z$ directions, hence $m_\eps(y)\in\cM_\eps$.

Conversely, let $v\in\cM_\eps$. Write $v=y+z$ with $y\in Y\setminus\{0\}$ and $z\in Z$. For any $t\geq0$ and $\zeta\in Z$, the identities
$$
\Phi_\eps'(v)(v)=0,
\qquad
\Phi_\eps'(v)(\zeta)=0
$$
yield, after expanding the quadratic part,
\begin{align*}
\Phi_\eps(tv+\zeta)-\Phi_\eps(v)
&=-\frac12\norm{\zeta}^2\\
&\quad+\int_{\R^N}
f_\eps(v)\left(\frac{t^2-1}{2}v+t\zeta\right)
+F_\eps(v)-F_\eps(tv+\zeta)
 \, dx.
\end{align*}
Lemma \ref{lem:scalar}, applied pointwise with $a=v(x)$ and $b=\zeta(x)$, implies
$$
\Phi_\eps(tv+\zeta)\le\Phi_\eps(v).
$$
Strict monotonicity implies uniqueness of the maximizer on each fiber. Consequently the map
$$
S_Y\ni y\mapsto m_\eps(y)\in\cM_\eps
$$
is a well-defined bijection (in fact, it is an homeomorphism), and
$$
\inf_{v\in\cM_\eps}\Phi_\eps(v)
=\inf_{y\in S_Y}\max_{w\in\widehat E(y)}\Phi_\eps(w).
$$

Then, one can show (see \cite{SzulkinWeth2009}) that $\Psi$ defined as
$$
\Psi_\eps(y):=\Phi_\eps(m_\eps(y)),
\qquad y\in S_Y,
$$
is of class $C^1$, and for $\eta\in T_yS_Y$,
$$
\Psi_\eps'(y)(\eta)
=\norm{P_Ym_\eps(y)}\,
\Phi_\eps'(m_\eps(y))(\eta).
$$
Ekeland's variational principle applied to $\Psi_\eps$ therefore yields a sequence $(w_n)\subset\cM_\eps$ such that
\begin{equation}\label{eq:PSM}
\Phi_\eps(w_n)\to c_\eps,
\qquad
\Phi_\eps'(w_n)\to0.
\end{equation}

\noindent \textbf{Step 2: positivity and boundedness on $\cM_\eps$.}
Let $w=y+z\in\cM_\eps$, with $y\in Y=X^-$ and $z\in Z=X^+$. Since $\Phi_\eps'(w)(z)=0$ and $\Phi_\eps'(w)(w)=0$, we also have $\Phi_\eps'(w)(y)=0$. Thus
\begin{align*}
\norm{y}^2&=\int_{\R^N}f_\eps(w)y\,dx,\\
\norm{z}^2&=-\int_{\R^N}f_\eps(w)z\,dx.
\end{align*}
Using \eqref{eq:f-growth}, H\"older inequality, and \eqref{eq:sobolev-spectral},
\begin{align}
\norm{y}
&\leq C\norm{w}_{L^q(\R^N)}^{q-1},
\qquad
\norm{z}
\leq C\norm{w}_{L^q(\R^N)}^{q-1}.
\label{eq:component-bound-M}
\end{align}
Hence
\begin{equation}\label{eq:norm-vs-Lq-M}
\norm{w}\leq C\norm{w}_{L^q(\R^N)}^{q-1}
\leq C\norm{w}^{q-1}.
\end{equation}
Because $w\ne0$ and $q>2$, \eqref{eq:norm-vs-Lq-M} gives a number $\rho_0>0$, independent of $\eps$, such that
\begin{equation}\label{eq:M-away-zero}
\norm{w}\geq\rho_0,
\qquad \mbox{for all }  w\in\cM_\eps.
\end{equation}
Combining the first inequality in \eqref{eq:norm-vs-Lq-M} with \eqref{eq:M-away-zero}, we also obtain
\begin{equation}\label{eq:Lq-away-zero}
\norm{w}_{L^q(\R^N)}\geq\eta_0>0,
\qquad \mbox{for all } w\in\cM_\eps,
\end{equation}
with $\eta_0$ independent of $\eps$.

Moreover, since $\Phi_\eps'(w)(w)=0$,
\begin{align}
\Phi_\eps(w)
&=\Phi_\eps(w)-\frac12\Phi_\eps'(w)(w)=\int_{\R^N}
\frac12 f_\eps(w)w-F_\eps(w) \, dx.
\label{eq:energy-identity}
\end{align}
By \eqref{eq:ft-bounds} and \eqref{eq:F-bounds},
\begin{equation}\label{eq:kappa}
\frac12 f_\eps(t)t-F_\eps(t)
\geq\left(\frac{1-\delta}{2}-\frac1q\right)|t|^q
=:\kappa_\delta |t|^q, \qquad t \in \R,
\end{equation}
where $\kappa_\delta>0$ by \eqref{eq:delta-choice}. Hence, using \eqref{eq:Lq-away-zero},
\begin{equation}\label{eq:c-positive-uniform}
\Phi_\eps(w)\geq\kappa_\delta\norm{w}_{L^q(\R^N)}^q
\geq\kappa_\delta\eta_0^q>0.
\end{equation}
In particular $c_\eps>0$.

If $\Phi_\eps(w)$ is bounded on a subset of $\cM_\eps$, then \eqref{eq:c-positive-uniform} bounds $\norm w_{L^q}$; \eqref{eq:component-bound-M} then bounds $\norm w$. Thus $\Phi_\eps|_{\cM_\eps}$ is coercive. In particular, the Palais-Smale sequence in \eqref{eq:PSM} is bounded.

\noindent \textbf{Step 3: concentration-compactness.}
Let $(w_n)\subset\cM_\eps$ be the bounded Palais-Smale sequence from \eqref{eq:PSM}. Suppose it vanishes in the sense of Lions, i.e. for some $R>0$,
$$
\sup_{\xi \in\R^N}\int_{B(\xi, R)}|w_n|^2\,dx\to0.
$$
Then the Lions concentration-compactness lemma \cite{Lions1984} gives
\begin{equation}\label{eq:Lions-vanish}
\norm{w_n}_{L^q(\R^N)}\to0
\end{equation}
because $2<q<2^*$. But \eqref{eq:component-bound-M} would imply $\norm{w_n}\to0$, contradicting \eqref{eq:M-away-zero}. Therefore the sequence does not vanish.

Consequently, after replacing arbitrary concentration centers by nearby lattice points, there exist $R>0$, $\eta>0$, and $k_n\in\Z^N$ such that
\begin{equation}\label{eq:nonvanish-ball}
\int_{B(0,R)}|w_n(x+k_n)|^2\,dx\geq\eta.
\end{equation}
Set
$$
\widetilde w_n(x)=w_n(x+k_n).
$$
By periodicity, $(\widetilde w_n)$ is again a bounded Palais-Smale sequence for $\Phi_\eps$. After passing to a subsequence,
\begin{equation*}
\widetilde w_n\weak v_\eps
\quad\mbox{in }H^1(\R^N),
\end{equation*}
and
$$
\widetilde w_n\to v_\eps
\quad\text{in }L^r_{\mathrm{loc}}(\R^N),
\qquad 2\leq r<2^*,
$$
and a.e. Since $q<2^*$ and $|f_\eps(t)|\le|t|^{q-1}$, the Nemytskii operator is locally continuous from $L^q$ to $L^{q'}$. Hence, for every $\varphi\in C_c^\infty(\R^N)$,
$$
\Phi_\eps'(v_\eps)(\varphi)
=\lim_{n\to\infty}\Phi_\eps'(\widetilde w_n)(\varphi)=0.
$$
By density, $\Phi_\eps'(v_\eps)=0$. The strong convergence in $L^2_{\mathrm{loc}}$ together with \eqref{eq:nonvanish-ball} gives $v_\eps\ne0$.

By \eqref{eq:kappa}
$$
\frac12f_\eps(t)t-F_\eps(t) \geq 0, \qquad t \in \R.
$$
Since $v_\eps$ is a nontrivial critical point, $v_\eps\in\cM_\eps$, and therefore
$$
\Phi_\eps(v_\eps)
=\int_{\R^N} \frac12f_\eps(v_\eps)v_\eps-F_\eps(v_\eps)  \, dx.
$$
Fatou's lemma and \eqref{eq:energy-identity} yield
\begin{align*}
\Phi_\eps(v_\eps)
&\leq \liminf_{n\to\infty}
\int_{\R^N} \frac12f_\eps(\widetilde w_n)\widetilde w_n-F_\eps(\widetilde w_n)   \, dx\\
&=\liminf_{n\to\infty}\Phi_\eps(w_n)=c_\eps.
\end{align*}
On the other hand, $v_\eps\in\cM_\eps$ implies $\Phi_\eps(v_\eps)\geq c_\eps$. Thus equality holds and \eqref{eq:c-eps} follows.
\end{proof}

\section{Uniform comparison of the ground-state levels}
\label{sect:5}

The next step is the central uniform estimate. For $a>0$, consider the pure $q$-power functional $\Phi_a : H^1(\R^N) \rightarrow \R$,
\begin{equation*}
\Phi_a(v)
:=\frac12\norm{v^-}^2-\frac12\norm{v^+}^2
-\frac{a}{q}\norm{v}_{L^q(\R^N)}^q, \quad v = v^+ + v^- \in X^+ \oplus X^-.
\end{equation*}
Let $c_a$ be its Nehari-Pankov ground-state level. Proposition \ref{prop:ground-truncated}, with the obvious simplification, applies to $\Phi_a$ (\cite{SzulkinWeth2009}).

\begin{lemma}\label{lem:ca-scaling}
For every $a>0$,
\begin{equation}\label{eq:ca-scaling}
c_a=a^{-2/(q-2)}c_1.
\end{equation}
\end{lemma}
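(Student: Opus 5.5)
The plan is to use the homogeneity of the pure power nonlinearity through the dilation $v\mapsto sv$ with $s:=a^{-1/(q-2)}$. This map is a linear isomorphism of $H^1(\R^N)$. It commutes with the spectral projections, so it preserves $Y=X^-$ and $Z=X^+$ and maps each fiber $\widehat E(y)$ onto itself. It also sends $S_Y$, via $y\mapsto y$ and after normalisation, to the same family of fibers. For every $v\in H^1(\R^N)$ we compute
$$
\Phi_a(sv)=\frac{s^2}{2}\left(\norm{v^-}^2-\norm{v^+}^2\right)-\frac{a s^q}{q}\norm{v}_{L^q(\R^N)}^q
= s^2\left(\frac12\norm{v^-}^2-\frac12\norm{v^+}^2-\frac{a s^{q-2}}{q}\norm{v}_{L^q(\R^N)}^q\right).
$$
With $as^{q-2}=1$ this reads $\Phi_a(sv)=s^2\Phi_1(v)=a^{-2/(q-2)}\Phi_1(v)$.

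Next I use the minimax characterisation \eqref{eq:c-eps}, which holds for $\Phi_a$ by Proposition \ref{prop:ground-truncated} in its simplified form. For every $y\in S_Y$ we have $s\widehat E(y)=\widehat E(y)$, because the fiber is a cone invariant under positive scalings. Hence
$$
\max_{w\in\widehat E(y)}\Phi_a(w)=\max_{w\in\widehat E(y)}\Phi_a(sw)=a^{-2/(q-2)}\max_{w\in\widehat E(y)}\Phi_1(w).
$$
Taking the infimum over $y\in S_Y$ gives $c_a=a^{-2/(q-2)}c_1$.

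A second route is to check directly that $v\mapsto sv$ maps $\cM_1$ bijectively onto $\cM_a$. This holds because $\Phi_a'(sv)(s\varphi)=s^2\Phi_1'(v)(\varphi)$, so the Nehari–Pankov constraints are preserved, and $sv\notin Z$ if and only if $v\notin Z$. The identity then follows from $c=\inf_{\cM}\Phi$.

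There is no real obstacle here. The only point requiring care is that the dilation respects the splitting $Y\oplus Z$ and the fibers. This is immediate since $P^\pm$ are linear and $\widehat E(y)$ is a cone, and it is the reason the fiber-wise formulation is the cleanest one to use.
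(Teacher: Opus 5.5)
Your proposal is correct and is essentially the paper's argument. The paper uses the same homogeneity identity $\Phi_a(a^{-1/(q-2)}w)=a^{-2/(q-2)}\Phi_1(w)$ and concludes through the induced bijection between the generalized Nehari manifolds (equivalently, the nontrivial critical points) of $\Phi_1$ and $\Phi_a$, which is your second route; your primary fiber-wise version, using $s\widehat E(y)=\widehat E(y)$ and the minimax formula \eqref{eq:c-eps}, is an equivalent repackaging that also matches how the lemma is used in Lemma \ref{lem:c-uniform}.
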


\begin{proof}
For $v\in H^1(\R^N)$, put $w=a^{1/(q-2)}v$, i.e.
$$
v=a^{-1/(q-2)}w.
$$
Then
\begin{align*}
\Phi_a(v)
&=\frac12a^{-2/(q-2)}
\left(\norm{w^-}^2-\norm{w^+}^2\right)
-\frac aq a^{-q/(q-2)}\norm w_q^q\\
&=a^{-2/(q-2)}\Phi_1(w),
\end{align*}
because
$$
1-\frac{q}{q-2}=-\frac{2}{q-2}.
$$
The scaling gives a bijection between the nontrivial critical points (and between the generalized Nehari manifolds) of $\Phi_1$ and $\Phi_a$. Taking the least critical level yields \eqref{eq:ca-scaling}.
\end{proof}

Let
$$
a_\delta:=1-\delta\in(0,1).
$$
By Lemma \ref{lem:truncation},
$$
\frac{a_\delta}{q}|t|^q
\leq F_\eps(t)
\le\frac1q|t|^q.
$$
Therefore, for $v \in H^1(\R^N)$,
\begin{equation}\label{eq:functional-order}
\Phi_1(v)
\le\Phi_\eps(v)
\le\Phi_{a_\delta}(v).
\end{equation}
The fiber minimax formula \eqref{eq:c-eps} preserves this ordering.

\begin{lemma}\label{lem:c-uniform}
For every $\eps>0$,
$$
c_1\leq c_\eps\leq c_{a_\delta}
=(1-\delta)^{-2/(q-2)}c_1.
$$
In particular, there is a constant $C_c>0$, independent of $\eps$, such that
$$
0<c_\eps\leq C_c.
$$
\end{lemma}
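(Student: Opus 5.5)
The plan is to use the fiber minimax characterization \eqref{eq:c-eps}, which holds for $\Phi_\eps$ by Proposition~\ref{prop:ground-truncated}. The same characterization holds for $\Phi_a$, $a>0$, since $t\mapsto a|t|^{q-2}t$ satisfies all hypotheses used in that proof (odd, $f(t)t>0$, $f(t)/|t|$ strictly increasing, superquadratic, Ambrosetti--Rabinowitz). Thus
$$
c_a=\inf_{y\in S_Y}\max_{w\in\widehat E(y)}\Phi_a(w),
\qquad
c_\eps=\inf_{y\in S_Y}\max_{w\in\widehat E(y)}\Phi_\eps(w).
$$
The fibers $\widehat E(y)$ and the sphere $S_Y$ are defined only through the spectral splitting $Y=X^-$, $Z=X^+$. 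They are therefore the same for all three functionals $\Phi_1$, $\Phi_\eps$, $\Phi_{a_\delta}$.

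For the first step, fix $y\in S_Y$. The pointwise order \eqref{eq:functional-order} gives $\Phi_1(w)\le\Phi_\eps(w)\le\Phi_{a_\delta}(w)$ for every $w\in\widehat E(y)$. Taking suprema over the common fiber gives
$$
\max_{\widehat E(y)}\Phi_1\le\max_{\widehat E(y)}\Phi_\eps\le\max_{\widehat E(y)}\Phi_{a_\delta},
$$
and all three maxima are attained by Step~1 of Proposition~\ref{prop:ground-truncated}. Taking the infimum over $y\in S_Y$ preserves the inequalities, so $c_1\le c_\eps\le c_{a_\delta}$.

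For the second step, Lemma~\ref{lem:ca-scaling} with $a=a_\delta=1-\delta$ gives $c_{a_\delta}=(1-\delta)^{-2/(q-2)}c_1$. Setting $C_c:=(1-\delta)^{-2/(q-2)}c_1$ gives a bound depending only on $q$, $\delta$ and $V$ (through $c_1$), hence independent of $\eps$. Positivity $c_\eps>0$ already follows from \eqref{eq:c-positive-uniform}, or alternatively from $c_\eps\ge c_1>0$.

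The only point requiring care is the justification that the minimax formula for $c_a$ really coincides with the Nehari--Pankov ground-state level used in Lemma~\ref{lem:ca-scaling}. That is, the infimum over $\cM_a$ equals the infimum over $S_Y$ of fiber maxima, and this equals the least nontrivial critical level. This is exactly the content of Proposition~\ref{prop:ground-truncated} applied to the pure power, which the paper has already stated applies. Beyond that, the argument is purely monotone and needs no compactness.
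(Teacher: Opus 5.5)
Your proposal is correct and matches the paper's proof: you use the pointwise ordering \eqref{eq:functional-order} on the common fibers $\widehat E(y)$, take the maximum over each fiber and then the infimum over $y\in S_Y$ via the fiber characterization \eqref{eq:c-eps}, and finish with Lemma~\ref{lem:ca-scaling} at $a=1-\delta$. Your added remark that positivity follows from \eqref{eq:c-positive-uniform} (or from $c_\eps\ge c_1>0$) makes explicit a point the paper leaves implicit.
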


\begin{proof}
For every $y\in S_Y$, \eqref{eq:functional-order} gives
$$
\max_{w\in\widehat E(y)}\Phi_1(w)
\le
\max_{w\in\widehat E(y)}\Phi_\eps(w)
\le
\max_{w\in\widehat E(y)}\Phi_{a_\delta}(w).
$$
Taking the infimum over $y\in S_Y$ and using the fiber characterization \eqref{eq:c-eps} of the three ground-state levels gives
$$
c_1\leq c_\eps\leq c_{a_\delta}.
$$
Lemma \ref{lem:ca-scaling} finishes the proof.
\end{proof}

We can now bound the ground states $v_\eps$ uniformly.

\begin{proposition}\label{prop:H1-uniform}
There exist $c_H, C_H>0$, independent of $\eps>0$, such that the ground state $v_\eps$ in Proposition \ref{prop:ground-truncated} satisfies
\begin{equation}\label{eq:H1-uniform}
c_H \leq \norm{v_\eps}_{H^1(\R^N)}\leq C_H.
\end{equation}
\end{proposition}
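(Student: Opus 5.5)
The plan is to exploit the fact that $v_\eps$ is a nontrivial critical point of $\Phi_\eps$, so $v_\eps\in\cM_\eps$. Both bounds then follow from estimates already established on $\cM_\eps$ whose constants do not depend on $\eps$, combined with the uniform level bound of Lemma \ref{lem:c-uniform}. Because the spectral norm $\norm{\cdot}$ is equivalent to the usual $H^1$ norm, with constants depending only on $V$ and $N$, it is enough to prove \eqref{eq:H1-uniform} for $\norm{\cdot}$.

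\emph{Lower bound.} By \eqref{eq:M-away-zero}, every $w\in\cM_\eps$ satisfies $\norm{w}\geq\rho_0$. The number $\rho_0$ comes from \eqref{eq:norm-vs-Lq-M}, namely $\norm{w}\le C\norm{w}^{q-1}$. There $C$ depends only on the Sobolev constants in \eqref{eq:sobolev-spectral} and on the $\eps$-independent growth bound \eqref{eq:f-growth}. Hence $\rho_0=C^{-1/(q-2)}$ is independent of $\eps$. Applying this to $w=v_\eps$ and converting norms gives $c_H$.

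\emph{Upper bound.} First I use the energy identity \eqref{eq:energy-identity} together with \eqref{eq:kappa} and \eqref{eq:c-positive-uniform}:
$$
\kappa_\delta\norm{v_\eps}_{L^q(\R^N)}^q\leq\Phi_\eps(v_\eps)=c_\eps\leq C_c,
$$
where the last step is Lemma \ref{lem:c-uniform}. This gives
$$
\norm{v_\eps}_{L^q(\R^N)}\leq (C_c/\kappa_\delta)^{1/q},
$$
independently of $\eps$. Next, writing $v_\eps=y+z$, the component estimates \eqref{eq:component-bound-M} give
$$
\norm{y}+\norm{z}\leq 2C\norm{v_\eps}_{L^q(\R^N)}^{q-1}.
$$
These estimates use only the Nehari--Pankov identities $\Phi_\eps'(v_\eps)(y)=\Phi_\eps'(v_\eps)(z)=0$, H\"older's inequality, and \eqref{eq:f-growth}. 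Since $Y$ and $Z$ are orthogonal for the spectral norm, $\norm{v_\eps}^2=\norm{y}^2+\norm{z}^2$, and so $\norm{v_\eps}$ is bounded by a constant depending only on $N,p,q,V,\delta$. Converting norms yields $C_H$.

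The only point needing care is to confirm that no constant depends on $\eps$. This holds for the following reasons. The constants in \eqref{eq:ft-bounds}--\eqref{eq:f-growth} depend only on $\delta$, which is fixed in \eqref{eq:delta-choice}. The quantity $\kappa_\delta$ depends only on $q$ and $\delta$. The bound $C_c=(1-\delta)^{-2/(q-2)}c_1$ involves only the $\eps$-free level $c_1$. Beyond this check, the argument is essentially bookkeeping.
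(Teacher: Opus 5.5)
Your proposal is correct and follows essentially the same argument as the paper. You get the $L^q$ bound from \eqref{eq:kappa} and Lemma \ref{lem:c-uniform}, the upper bound from the component estimates in $X^\pm$, and the lower bound from $\norm{v_\eps}\le C\norm{v_\eps}^{q-1}$; the only cosmetic difference is that you cite \eqref{eq:component-bound-M} and \eqref{eq:M-away-zero}, already proven on $\cM_\eps$, whereas the paper re-derives them by testing the equation for $v_\eps$ with $v_\eps^\pm$.
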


\begin{proof}
Since $v_\eps$ is a critical point of $\Phi_\eps$,
$$
\Phi_\eps(v_\eps)
=\int_{\R^N}
\frac12 f_\eps(v_\eps)v_\eps-F_\eps(v_\eps) \, dx.
$$
By \eqref{eq:kappa} and Lemma \ref{lem:c-uniform},
\begin{equation*}
\kappa_\delta\norm{v_\eps}_{L^q(\R^N)}^q
\leq c_\eps\leq C_c.
\end{equation*}
Thus
\begin{equation}\label{eq:Lq-C}
\norm{v_\eps}_{L^q}\leq C_q
:=\left(\frac{C_c}{\kappa_\delta}\right)^{1/q}.
\end{equation}

Write
$$
v_\eps=v_\eps^-+v_\eps^+,
\qquad v_\eps^-\in X^-,\quad v_\eps^+\in X^+.
$$
Testing \eqref{eq:truncated} by $v_\eps^-$ gives
$$
-\norm{v_\eps^-}^2
=-\int_{\R^N}f_\eps(v_\eps)v_\eps^-\,dx,
$$
so
\begin{equation}\label{eq:minus-test}
\norm{v_\eps^-}^2
=\int_{\R^N}f_\eps(v_\eps)v_\eps^-\,dx.
\end{equation}
Similarly, testing by $v_\eps^+$ gives
\begin{equation}\label{eq:plus-test}
\norm{v_\eps^+}^2
=-\int_{\R^N}f_\eps(v_\eps)v_\eps^+\,dx.
\end{equation}
Using \eqref{eq:f-growth}, H\"older's inequality, and \eqref{eq:sobolev-spectral},
\begin{align*}
\norm{v_\eps^-}^2
&\le\norm{v_\eps}_{L^q(\R^N)}^{q-1}\norm{v_\eps^-}_{L^q(\R^N)}
\leq C_q^{q-1}C_S\norm{v_\eps^-},\\
\norm{v_\eps^+}^2
&\le\norm{v_\eps}_{L^q(\R^N)}^{q-1}\norm{v_\eps^+}_{L^q(\R^N)}
\leq C_q^{q-1}C_S\norm{v_\eps^+}.
\end{align*}
Hence
$$
\norm{v_\eps^-}+\norm{v_\eps^+}\leq C
$$
with $C$ independent of $\eps$. Norm equivalence gives the right inequality in \eqref{eq:H1-uniform}.

For the lower bound, the same two component estimates, without using \eqref{eq:Lq-C}, give
$$
\norm{v_\eps}
\leq C\norm{v_\eps}_{L^q(\R^N)}^{q-1}
\leq C\norm{v_\eps}^{q-1}.
$$
Since $v_\eps\ne0$ and $q>2$, division by $\norm{v_\eps}$ yields
$$
\norm{v_\eps}^{q-2}\geq C^{-1}.
$$
Thus $\norm{v_\eps}\geq c>0$, and norm equivalence gives the left part of \eqref{eq:H1-uniform}.
\end{proof}

\section{A uniform \texorpdfstring{$L^\infty$}{L-infinity} estimate}
\label{sect:6}

We now prove that the $H^1$ estimate in Proposition \ref{prop:H1-uniform}, together with the uniform growth bound \eqref{eq:f-growth}, implies a bound in $L^\infty$ independent of $\eps$. This is the step that removes the truncation for small $\eps$.

\begin{lemma}\label{lem:Linfty}
Let $(h_n)$ be a sequence of continuous functions satisfying
\begin{equation}\label{eq:h-growth}
|h_n(t)|\leq |t|^{q-1},
\qquad t\in\R,
\end{equation}
where $2<q<2^*$. Suppose $u_n\in H^1(\R^N)$ are weak solutions of
$$
-\Delta u_n+V(x)u_n=-h_n(u_n)
$$
and
$$
\sup_n\norm{u_n}_{H^1(\R^N)}\leq C_H.
$$
Then there exists $C_\infty>0$, depending only on $N,q,\norm{V}_{L^\infty(\R^N)},C_H$, such that
\begin{equation}\label{eq:Linfty-unif}
\sup_n\norm{u_n}_{L^\infty(\R^N)}\leq C_\infty.
\end{equation}
\end{lemma}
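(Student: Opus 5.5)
We prove the bound by writing the equation as a linear one with a potential term that is uniformly controlled in an $L^{r}$ space with $r>N/2$ on unit balls, and then applying local elliptic estimates that are uniform in the translation. Rewrite
$$
-\Delta u_n+V(x)u_n=-c_n(x)u_n,\qquad c_n(x):=\frac{h_n(u_n(x))}{u_n(x)}\ \ (u_n(x)\neq0),\quad c_n:=0 \text{ otherwise}.
$$
By \eqref{eq:h-growth}, $|c_n|\le |u_n|^{q-2}$. Since $u_n$ is bounded in $H^1$, $c_n$ is bounded in $L^{2^*/(q-2)}(\R^N)$. The exponent $2^*/(q-2)$ exceeds $N/2$ exactly when $q<2^*$: indeed $2^*/(q-2)>N/2$ is equivalent to $q-2<4/(N-2)=2^*-2$. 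So $q<2^*$ is precisely what makes the problem subcritical in the De Giorgi/Moser sense. The full potential $W_n:=V+c_n$ then satisfies $\norm{W_n}_{L^{s}(B(\xi,2))}\le C$ uniformly in $n$ and $\xi\in\R^N$, with $s=\min\{2^*/(q-2),\,\text{large}\}>N/2$. The bound depends only on $\norm V_\infty$, $C_H$, $N$ and $q$.

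\textbf{Local estimate.} Next we apply the standard local maximum principle (Moser iteration, e.g. Gilbarg--Trudinger Theorem 8.17, or the Brezis--Kato argument) to $-\Delta u_n+W_nu_n=0$ on $B(\xi,2)$:
$$
\sup_{B(\xi,1)}|u_n|\le C\bigl(N,s,\norm{W_n}_{L^s(B(\xi,2))}\bigr)\,\norm{u_n}_{L^2(B(\xi,2))}\le C\,C_H .
$$
Taking the supremum over $\xi$ gives \eqref{eq:Linfty-unif}.

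\textbf{Main obstacle.} The delicate point is to make sure the Moser constant depends only on an upper bound for $\norm{W_n}_{L^s}$ with $s>N/2$, and not on finer properties of $W_n$ such as smallness or equi-integrability. In Gilbarg--Trudinger this holds for zero-order coefficients in $L^{s}$, $s>N/2$, so no smallness is required. If one prefers a self-contained route, we will run Moser iteration directly. Test with $u_n\min\{|u_n|,T\}^{2\beta}\eta^2$, and bound
$$
\int |c_n|\,|u_n|^{2+2\beta}\eta^2\le \norm{c_n}_{L^s}\,\norm{|u_n|^{1+\beta}\eta}_{L^{2s'}}^2 .
$$
Since $2s'<2^*$, interpolation between $L^2$ and $L^{2^*}$ together with Young's inequality absorbs this term into the gradient term, at the cost of a factor polynomial in $(1+\beta)$. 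Iterating over shrinking balls gives the bound with constants independent of $n$ and $\xi$.

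As a fallback, a two-step bootstrap also works. First, Brezis--Kato gives $u_n\in L^r$ for all $r<\infty$ uniformly: use the $L^{N/2}$ splitting of $c_n$ into a small part plus a bounded part. The uniform $L^s$ bound with $s>N/2$ avoids needing uniform equi-integrability here. Then local $W^{2,r}$ estimates with $r>N/2$ and the Sobolev embedding into $L^\infty$ finish the argument.
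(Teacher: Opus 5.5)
Your proof is correct, but it takes a different route from the paper's. The paper never linearizes. It uses only $|h_n(u_n)|\le|u_n|^{q-1}$ to place the right-hand side of $-\Delta u_n=-Vu_n-h_n(u_n)$ in $L^{r/(q-1)}$ on balls. It then applies interior Calder\'on--Zygmund $W^{2,s}$ estimates and Sobolev embedding. Finally, the affine recursion $a_{j+1}-a_*=(q-1)(a_j-a_*)$, with $a_j=1/r_j$ and $a_*=2/(N(q-2))$, shows that subcriticality ($a_0<a_*$) forces $2s_j\ge N$ after finitely many steps on a fixed chain of nested balls.

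You instead fold the nonlinearity into the potential $W_n=V+h_n(u_n)/u_n$. You observe that $|c_n|\le|u_n|^{q-2}$ is bounded in $L^{2^*/(q-2)}$, and that $2^*/(q-2)>N/2$ holds exactly when $q<2^*$. You then conclude in one step from the De Giorgi--Nash--Moser local maximum principle. You also correctly identify the key point: the strict inequality $s>N/2$ makes the constant depend only on an upper bound for $\norm{W_n}_{L^s(B(\xi,2))}$, with no equi-integrability needed.

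\textbf{Comparison.} The paper's route needs only linear $L^s$-theory for the Laplacian, whose translation invariance gives uniformity in $x_0$. As a by-product it yields uniform $W^{2,s}_{\mathrm{loc}}$ bounds. Your route replaces the exponent bookkeeping with heavier but standard machinery. In exchange it:
\begin{itemize}
\item makes the role of $q<2^*$ visible as a single integrability threshold;
\item gives an estimate of the form $\sup_{B(\xi,1)}|u_n|\le C\norm{u_n}_{L^2(B(\xi,2))}$;
\item would survive replacing $-\Delta$ by a divergence-form operator with bounded measurable coefficients, where $W^{2,s}$ estimates are unavailable.
\end{itemize}
Your Brezis--Kato fallback ends with essentially the paper's final Calder\'on--Zygmund step. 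The paper's observation is that, because the growth is subcritical, the Brezis--Kato step can itself be replaced by finitely many such steps.

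\textbf{Citation caveat.} Gilbarg--Trudinger state Theorem 8.17 under bounded lower-order coefficients. For a zero-order term lying only in $L^s$ with $s>N/2$, you should cite an unbounded-coefficient version (for example, the local boundedness theorem in Han--Lin). Alternatively, rely on your own Moser sketch, which is sound.
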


\begin{proof}
The argument is a finite local $W^{2,s}$ bootstrap, made uniform with respect to translations in $\R^N$.

By Sobolev embeddings,
$$
\sup_n\norm{u_n}_{L^{r_0}(\R^N)}\leq C,
\qquad r_0:=2^*=\frac{2N}{N-2}.
$$
Fix an arbitrary $x_0\in\R^N$. We shall use nested balls centered at $x_0$; the constants in the interior estimates do not depend on $x_0$ because the principal part is $-\Delta$ and $\norm{V}_{L^\infty(\R^N)}$ is fixed.

Assume at some stage that, on a ball $B(x_0,R)$,
$$
\norm{u_n}_{L^{r}(B(x_0,R))}\leq C_R
$$
for some $r\geq r_0$, uniformly in $n,x_0$. Put $s:=\frac{r}{q-1}$. Since $r\geq r_0=2^*>q-1$, we have $s>1$. By \eqref{eq:h-growth},
$$
\norm{h_n(u_n)}_{L^s(B(x_0,R))}
\le\norm{u_n}_{L^r(B(x_0,R))}^{q-1}\leq C.
$$
Also $s\leq r$, hence on the bounded ball
$$
\norm{V u_n}_{L^s(B(x_0,R))}
\leq C\norm{V}_{L^\infty(\R^N)}\norm{u_n}_{L^r(B(x_0,R))}.
$$
The Calder\'on-Zygmund inequality for
$$
-\Delta u_n=-V(x) u_n-h_n(u_n)
$$
gives, for some $0<r_1<R$,
\begin{equation}\label{eq:W2s}
\norm{u_n}_{W^{2,s}(B(x_0,r_1))}
\leq C\left(
\norm{Vu_n+h_n(u_n)}_{L^s(B(x_0,R))}
+\norm{u_n}_{L^s(B(x_0,R))}
\right)
\leq C.
\end{equation}
The constant is uniform in $n,x_0$.

If $2s>N$, the embedding $W^{2,s}\hookrightarrow L^\infty$ gives the desired local $L^\infty$ bound. Since $x_0$ is arbitrary, the global bound follows.

If $2s=N$, then $W^{2,N/2}$ embeds into $L^r$ for every finite $r$. We choose a sufficiently large finite exponent $r$ so that, at the next step, $r/(q-1)>N/2$; then the preceding case applies.

Suppose therefore that $2s<N$. Sobolev embedding for $W^{2,s}$ gives the new exponent
$$
r_{\mathrm{new}}
=\frac{Ns}{N-2s},
\qquad
\frac1{r_{\mathrm{new}}}
=\frac1s-\frac2N
=\frac{q-1}{r}-\frac2N.
$$
We iterate this construction on finitely many nested balls. To see that the iteration reaches the $L^\infty$ regime after finitely many steps, define $r_j$ by $r_0=2^*$ and, as long as $2r_j/(q-1)<N$,
$$
\frac1{r_{j+1}}
=(q-1)\frac1{r_j}-\frac2N.
$$
Let
$$
a_j:=\frac1{r_j},
\qquad
a_*:=\frac{2}{N(q-2)}.
$$
Then
$$
a_{j+1}-a_*
=(q-1)(a_j-a_*),
$$
so
\begin{equation}\label{eq:affine-solution}
a_j-a_*=(q-1)^j(a_0-a_*).
\end{equation}
Now
$$
a_0=\frac{N-2}{2N},
$$
and the subcritical condition $q<2^*=2N/(N-2)$ is exactly
$$
\frac{N-2}{2N}<\frac{2}{N(q-2)}=a_*.
$$
Hence $a_0-a_*<0$. Since $q-1>1$, the right-hand side of \eqref{eq:affine-solution} tends to $-\infty$. Therefore, after finitely many iterations, we must reach
$$
\frac{r_j}{q-1}>\frac N2,
$$
i.e. $2s_j>N$, or the borderline case $2s_j=N$, which was already handled.

Choose in advance a finite nested chain of balls, for example
$$
B(x_0, 2)\supset B(x_0, 2-1/m)\supset\cdots\supset B(x_0, 1),
$$
with $m$ larger than the finite number of bootstrap steps. Applying \eqref{eq:W2s} successively yields
$$
\norm{u_n}_{L^\infty(B(x_0,1))}\leq C_\infty
$$
with $C_\infty$ independent of $n$ and $x_0$. Taking the supremum over $x_0\in\R^N$ proves \eqref{eq:Linfty-unif}.
\end{proof}

Applying Lemma \ref{lem:Linfty} we easily obtain the following.

\begin{corollary}\label{cor:Linfty-veps}
There exists $C_\infty>0$, independent of $\eps>0$, such that
\begin{equation}\label{eq:veps-Linfty}
\norm{v_\eps}_{L^\infty(\R^N)}\leq C_\infty.
\end{equation}
\end{corollary}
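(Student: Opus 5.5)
The corollary follows from Lemma \ref{lem:Linfty}, applied with $h_n:=f_{\eps_n}$ and $u_n:=v_{\eps_n}$ along an arbitrary sequence $\eps_n>0$. Since $C_\infty$ there depends only on $N,q,\norm{V}_{L^\infty(\R^N)}$ and the $H^1$ bound, the resulting constant is independent of $\eps$. If one prefers to avoid sequences, the lemma can simply be applied to the family $\{v_\eps\}_{\eps>0}$ indexed by $\eps$; its proof never uses that the index set is countable.

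We need to verify the hypotheses of Lemma \ref{lem:Linfty}.

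\emph{Continuity and growth.} By Lemma \ref{lem:truncation}, each $f_\eps$ is continuous (indeed locally Lipschitz). By \eqref{eq:f-growth}, $|f_\eps(t)|\le |t|^{q-1}$ for all $t\in\R$, with constant $1$ independent of $\eps$. This is exactly \eqref{eq:h-growth}, and $2<q<2^*$ holds by \eqref{eq:pq}.

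\emph{Equation.} By Proposition \ref{prop:ground-truncated}, $v_\eps$ is a critical point of $\Phi_\eps$. The formula for $\Phi_\eps'$ then gives $\langle Lv_\eps,\varphi\rangle=-\int_{\R^N} f_\eps(v_\eps)\varphi\,dx$ for all $\varphi\in H^1(\R^N)$. In other words, $v_\eps$ is a weak solution of $-\Delta v_\eps+V(x)v_\eps=-f_\eps(v_\eps)$, which is \eqref{eq:truncated}.

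\emph{$H^1$ bound.} The uniform bound $\sup_\eps\norm{v_\eps}_{H^1(\R^N)}\le C_H$ is the right inequality in \eqref{eq:H1-uniform} of Proposition \ref{prop:H1-uniform}.

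With these three checks, Lemma \ref{lem:Linfty} yields \eqref{eq:veps-Linfty}. There is no real obstacle here. The only point to watch is that every constant entering Lemma \ref{lem:Linfty} ($q$, the growth constant $1$, $C_H$, $\norm{V}_{L^\infty(\R^N)}$, $N$) is independent of $\eps$, which the earlier uniform estimates guarantee.
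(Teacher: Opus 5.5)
Your proposal is correct and is the paper's argument: the paper states the corollary as a direct application of Lemma \ref{lem:Linfty} with $h=f_\eps$, $u=v_\eps$, using \eqref{eq:f-growth}, the fact that $v_\eps$ weakly solves \eqref{eq:truncated}, and the uniform bound of Proposition \ref{prop:H1-uniform}. Your observation that the lemma's constant depends only on $N,q,\norm{V}_{L^\infty(\R^N)},C_H$ is exactly what makes the sequence formulation harmless and $C_\infty$ independent of $\eps$.
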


\section{Removal of the truncation and proof of the main theorem}
\label{sect:7}

We are now in a position to remove the truncation.

\begin{proof}[Proof of Theorem \ref{thm:main}]
Choose
$$
\eps_0
:=\min\left\{1,\ \delta C_\infty^{-(p-q)}\right\}>0.
$$
If $0<\eps\le\eps_0$, then
$$
M_\eps
=\left(\frac\delta\eps\right)^{1/(p-q)}
\geq C_\infty.
$$
Together with \eqref{eq:veps-Linfty}, this gives
$$
|v_\eps(x)|\leq M_\eps
\quad\text{for a.e. }x\in\R^N.
$$
Therefore, by the first branch in \eqref{eq:aM},
$$
a_{M_\eps}(v_\eps)
=|v_\eps|^{p-2}v_\eps
\quad\text{a.e. in }\R^N.
$$
Equation \eqref{eq:truncated} hence becomes
\begin{align*}
Lv_\eps
&=-\left(|v_\eps|^{q-2}v_\eps
-\eps |v_\eps|^{p-2}v_\eps\right)=\eps |v_\eps|^{p-2}v_\eps-|v_\eps|^{q-2}v_\eps.
\end{align*}
Thus $v_\eps$ is a nontrivial solution of the \emph{untruncated} equation \eqref{eq:scaled} whenever $0<\eps\le\eps_0$.

Define
$$
\lambda_\infty
:=\eps_0^{-(q-2)/(p-2)}.
$$
For $\lambda\geq\lambda_\infty$, put
$$
\eps(\lambda)=\lambda^{-(p-2)/(q-2)}\le\eps_0
$$
and
\begin{equation}\label{eq:u-from-v}
u_\lambda
:=\lambda^{-1/(q-2)}v_{\eps(\lambda)}.
\end{equation}
By the calculation in Section 2, $u_\lambda$ solves \eqref{eq:original}. It is nontrivial because $v_{\eps(\lambda)}\ne0$.

Finally, Propositions \ref{prop:H1-uniform} and Corollary \ref{cor:Linfty-veps} yield
\begin{align*}
\norm{u_\lambda}_{H^1(\R^N)}
&=\lambda^{-1/(q-2)}\norm{v_{\eps(\lambda)}}_{H^1(\R^N)}
\leq C_H\lambda^{-1/(q-2)},\\
\norm{u_\lambda}_{L^\infty(\R^N)}
&=\lambda^{-1/(q-2)}\norm{v_{\eps(\lambda)}}_{L^\infty(\R^N)}
\leq C_\infty\lambda^{-1/(q-2)}.
\end{align*}
This proves Theorem \ref{thm:main}.
\end{proof}

\begin{remark}\label{rem:not-ground-original}
The solution $v_\eps$ is selected as a ground state of the truncated defocusing functional $\Phi_\eps$. Once the truncation is shown to be inactive on this solution, $v_\eps$ solves the original equation. We do not claim that it is a ground state among all solutions of the untruncated equation and the question about existence of a ground state remains open.
\end{remark}

\section{The limiting defocusing profile}
\label{sect:8}

The construction also explains the large-$\lambda$ asymptotics. Here we state the compactness conclusion.

\begin{proof}[Proof of Theorem \ref{thm:profile}]
For all sufficiently large $n$, recall that $v_n:=v_{\eps_n}$ satisfies the untruncated equation
\begin{equation}\label{eq:vn-exact}
Lv_n
=\eps_n|v_n|^{p-2}v_n-|v_n|^{q-2}v_n.
\end{equation}
Proposition \ref{prop:H1-uniform} gives
$$
\sup_n\norm{v_n}_{H^1(\R^N)}<\infty,
\qquad
\inf_n\norm{v_n}_{H^1(\R^N)}>0.
$$
We claim that $(v_n)$ cannot vanish in the sense of Lions. Otherwise
$$
\norm{v_n}_{L^q(\R^N)}\to0.
$$
Testing \eqref{eq:truncated} (or, equivalently, using \eqref{eq:minus-test}-\eqref{eq:plus-test}) and $|f_{\eps_n}(t)|\le|t|^{q-1}$ gives
$$
\norm{v_n}
\leq C\norm{v_n}_{L^q(\R^N)}^{q-1}\to0,
$$
contradicting the uniform lower bound. Hence, after lattice translations, there exist $R,\eta>0$ such that
$$
\int_{B(0,R)}|v_n(x+k_n)|^2\,dx\geq\eta.
$$
Set $\widetilde v_n=v_n(\cdot+k_n)$. Periodicity of $V$ preserves \eqref{eq:vn-exact}. Boundedness gives, after a subsequence,
$$
\widetilde v_n\weak v_0\quad\text{in }H^1(\R^N)
$$
and strong convergence in every $L^r_{\mathrm{loc}} (\R^N)$, $2 \leq r<2^*$. Moreover, $v_0\ne0$.

For the $p$-term,
\begin{equation}\label{eq:pterm-zero}
\norm{\eps_n|\widetilde v_n|^{p-2}\widetilde v_n}_{L^{p'}(\R^N)}
=\eps_n\norm{\widetilde v_n}_{L^p(\R^N)}^{p-1}
\leq C\eps_n\to0.
\end{equation}
For the $q$-term, strong convergence in $L^q_{\mathrm{loc}} (\R^N)$ implies
$$
|\widetilde v_n|^{q-2}\widetilde v_n
\to |v_0|^{q-2}v_0
\quad\text{in }L^{q'}_{\mathrm{loc}} (\R^N).
$$
Passing to the limit in the weak formulation of \eqref{eq:vn-exact} yields \eqref{eq:profile-eq}.
\end{proof}

\section*{Acknowledgements}

Bartosz Bieganowski was partly supported by the National Science Centre, Poland (Grant No. 2022/47/D/ST1/00487).

\section*{Statements and Declarations}

\textbf{Conflict of interest.} There is no conflict of interest.

\textbf{Data availability.} Not applicable.

\textbf{AI.} This manuscript used ChatGPT (OpenAI, GPT-5 Thinking) for light language editing only. No mathematical results, proofs or data were generated by the tool. All content was veriﬁed by the author.

\end{document}